\newcounter{prop}
\newtheorem{proposition}[prop]{Proposition}
\newcounter{coro}
\newtheorem{corollary}[coro]{Corollary}
\newcounter{def}
\newtheorem{definition}[def]{Definition}
\title{Rank of Jacobi operator and existence of quadratic parallel  differential form,  with 
applications to geometry of almost para-contact metric manifolds}
\author{Piotr Dacko}
\begin{document}
\begin{abstract}{It is established that non-isotropic vector field with Jacobi operator  
of maximal rank is an obstacle for the existence of non-trivial second-order symmetric 
parallel tensor field. It follows that such manifold 
as pseudo-Riemannian manifold is locally non-reducible.  In particular result is applied 
 to widely studied classes of almost para-contact metric manifolds - 
 para-contact metric, para-cosymplectic or para-Kenmotsu manifolds  
  satisfying nullity and generalized nullity conditions. As corollary we have the 
  following theorem: almost para-contact metric manifold with 
  maximal rank Jacobi operator of characteristic vector field is locally non-isometric 
  to Riemann product. }
\end{abstract}

\maketitle
\section{Introduction}

Many authors has recently studied  the problem of the existence of non-trivial  parallel 
quadratic form  on almost para-contact metric manifold which  satisfies generalized nullity conditions.
Let us recall it is said that almost para-contact metric manifold
satisfies generalized nullity condition if 
\begin{multline}
\label{nullcond}
R_{XY}\xi = \kappa (\eta(Y)X-\eta(X)Y)+ 
\mu (\eta(Y)hX-\eta(X)Y)+ \\ \nu(\eta(Y)h'X-\eta(X)h'Y),
\end{multline}    
$R$ is the operator of the Riemann curvature, and $\kappa$, $\mu$, $\nu$ are functions,
$\xi$ is the characteristic vector field. The more elaborate terminology is that characteristic vector 
field belongs to generalized nullity distribution.  
For the studied classes of manifolds the results are that in generic case 
  there is no parallel quadratic form, different from metric tensor  up to non-zero  multiplier.  
The proofs of these results are based 
on   properties of Jacobi operator  $X\mapsto J_\xi=R_{X\xi}\xi$,
particularly its algebraic form.
However more careful analysis shows that,  for above mentioned generic  cases, $J_\xi$  
has maximal rank. Going into this direction we have  
\begin{proposition}
There is no non-trivial parallel quadratic form if Jacobi operator 
of characteristic vector field has maximal rank. Manifold as Riemannian
manifold is locally irreducible.
\end{proposition}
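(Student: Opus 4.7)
The plan is to recast the question in terms of the $g$-symmetric $(1,1)$-tensor $\hat T$ metrically associated to a parallel symmetric $(0,2)$-tensor $T$, and to exploit the obligation that $\hat T$ commute with the full curvature. Parallelism of $T$ yields the classical identity
\[
T(R_{XY}Z,W)+T(Z,R_{XY}W)=0,
\]
equivalently $[\hat T,R_{XY}]=0$ as operators on each $T_pM$. Combined with the pair-symmetry $R(X,Y,Z,W)=R(Z,W,X,Y)$ of the Riemann tensor, this refines further to the operator identity $R(\hat T Z,W)=R(Z,\hat T W)$.

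The first key step is to show that $\xi$ is an eigenvector of $\hat T$. Setting $Z=W=\xi$ in the refined identity and applying both sides to $\xi$ gives $R(\hat T\xi,\xi)\xi = R(\xi,\hat T\xi)\xi$, and the skew-symmetry of $R$ in its first pair of arguments forces both sides to vanish. The left-hand side is precisely $J_\xi(\hat T\xi)$, so $\hat T\xi\in\ker J_\xi$. Since $J_\xi\xi=0$ always, the maximal-rank hypothesis is equivalent to $\ker J_\xi=\mathbb{R}\xi$; hence $\hat T\xi=\alpha\xi$ for some function $\alpha$.

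With $\hat T\xi=\alpha\xi$ in hand, the commutator $\hat T\,R_{X\xi}=R_{X\xi}\,\hat T$ applied to $\xi$ gives $\hat T(J_\xi X)=\alpha J_\xi X$, so $\hat T$ acts as multiplication by $\alpha$ on the image of $J_\xi$. This image coincides with $(\ker J_\xi)^\perp=\xi^\perp$, since $J_\xi$ is $g$-self-adjoint and the non-isotropy of $\xi$ makes $\xi^\perp$ a genuine hyperplane complement of $\mathbb{R}\xi$. Together with $\hat T\xi=\alpha\xi$ this yields $\hat T=\alpha\,\mathrm{Id}$ pointwise; parallelism of $\hat T$ then forces $\alpha$ to be (locally) constant, and hence $T=\alpha g$ is not a non-trivial parallel quadratic form.

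For the local irreducibility assertion, I would argue by contradiction: a local splitting $M=M_1\times M_2$ would force the Riemann tensor to vanish on vectors from distinct factors, so $J_\xi$ would preserve the decomposition $T_pM=T_pM_1\oplus T_pM_2$ and act on each factor as the Jacobi operator of the projection $\xi_i$ of $\xi$. In every case --- both projections non-zero, or $\xi$ tangent to a single factor --- $\ker J_\xi$ contains at least two linearly independent vectors ($\xi_1$ and $\xi_2$ in the first case; $\xi$ together with the entire complementary factor in the second), contradicting maximal rank. The main obstacle I anticipate is purely signature-theoretic: the identification $\mathrm{image}(J_\xi)=\xi^\perp$ relies on self-adjointness of $J_\xi$ and on non-isotropy of $\xi$, and the product argument tacitly requires non-degeneracy of the induced metric on each factor. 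The non-isotropy hypothesis built into the setup is precisely what bypasses these issues and lets the argument avoid invoking de Rham-type decomposition theorems.
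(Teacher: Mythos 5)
Your argument is correct, and it reaches the conclusion by a genuinely different route than the paper. You work with the $g$-associated endomorphism $\hat T$ and the commutation identity $[\hat T,R_{XY}]=0$ (plus its refinement $R_{\hat TZ,W}=R_{Z,\hat TW}$), first forcing $\hat T\xi\in\ker J_\xi=\mathbb{R}\xi$ and then propagating the eigenvalue over $\mathrm{Im}\,J_\xi=(\ker J_\xi)^\perp=\xi^\perp$, so that non-isotropy of $\xi$ gives $\hat T=\alpha\,\mathrm{Id}$ on all of $T_pM$ in one stroke. The paper instead stays at the level of the $(0,2)$-tensor: it introduces the $1$-forms $i_\xi\alpha$ and $i_\xi g$, shows by a kernel-dimension count that both annihilate $\mathrm{Im}\,J_\xi$ and hence are collinear, and then applies $\nabla^2_{Y,\xi}-\nabla^2_{\xi,Y}$ to the collinearity identity to land on $\alpha(J_\xi Y,X)=c\,g(J_\xi Y,X)$; it also needs a separate case analysis for $i_\xi\alpha=0$, which your eigenvector argument avoids entirely. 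Both proofs ultimately rest on the same two pillars (the Ricci identity applied to the parallel tensor, and surjectivity of $J_\xi$ onto $\xi^\perp$), but yours is closer to the classical Eisenhart-style commutant argument and is somewhat cleaner. For local irreducibility the paper simply invokes the fact that a Riemannian product carries the non-trivial parallel form $g_1\oplus 0$ and appeals to the first part, whereas you argue directly that a product splitting forces $\ker J_\xi$ to contain either $\xi_1,\xi_2$ or $\xi$ together with the complementary factor, hence to have dimension at least two; this is a valid and slightly more self-contained alternative, with the caveat you already note that the induced metrics on the factors must be non-degenerate for the product curvature formula to apply.
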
 
The latter sentence comes from the fact that Riemann product always admit 
non-trivial parallel differential quadratic form. 

 Let's denote by $\chi(\xi,x)$ the  characteristic polynomial of the Jacobi operator
 \begin{equation}
\chi(\xi,x)=x^n-\omega_1(\xi)x^{n-1}+\ldots (\pm 1)^{n-1}\omega_{n-1}(\xi)x,
\end{equation}
we will show the following
\begin{proposition}
Assuming $\xi$ is non-isotropic, $J_\xi$ has maximal rank if and only if the coefficient 
$\omega_{n-1}$ at the lowest power term is non-zero $\omega_{n-1}\neq 0$. 
\end{proposition}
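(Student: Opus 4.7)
The plan is to exploit two standard facts about the Jacobi operator $J_\xi$: it always annihilates $\xi$, and it is $g$-self-adjoint. First I would note that $J_\xi\xi = R_{\xi\xi}\xi = 0$ by the antisymmetry of $R$ in its first two arguments, so $\mathbb{R}\xi\subseteq \ker J_\xi$ and $0$ is always an eigenvalue; this is consistent with the fact that the stated form of $\chi(\xi,x)$ has no constant term. From the symmetries of the curvature tensor one checks $g(J_\xi X,Y) = g(R_{X\xi}\xi,Y) = g(R_{Y\xi}\xi,X) = g(J_\xi Y,X)$, so $J_\xi$ is self-adjoint.

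Next I would invoke the non-isotropy hypothesis $g(\xi,\xi)\neq 0$ to obtain the $g$-orthogonal splitting $TM = \mathbb{R}\xi\oplus\xi^\perp$. Self-adjointness plus $J_\xi\xi = 0$ implies that $\xi^\perp$ is invariant under $J_\xi$ (for $X\in\xi^\perp$, $g(J_\xi X,\xi) = g(X,J_\xi\xi) = 0$). Consequently, relative to this decomposition, $J_\xi$ has the block form $0\oplus J_\xi|_{\xi^\perp}$, and its characteristic polynomial factors as
\begin{equation}
\chi(\xi,x) = x\cdot \chi_\perp(x),
\end{equation}
where $\chi_\perp$ is the characteristic polynomial of the endomorphism $J_\xi|_{\xi^\perp}$ of the $(n-1)$-dimensional space $\xi^\perp$.

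Then I would translate the rank condition into a statement about $\chi_\perp$. Maximal rank means $\mathrm{rank}\, J_\xi = n-1$, equivalently $\dim\ker J_\xi = 1$, i.e. $\ker J_\xi = \mathbb{R}\xi$. Because $\xi\notin \xi^\perp$ (this is precisely where non-isotropy enters), any $Y\in\ker J_\xi$ written as $c\xi + Y^\perp$ with $Y^\perp\in\xi^\perp$ forces $J_\xi Y^\perp = 0$; so $\ker J_\xi = \mathbb{R}\xi$ iff $J_\xi|_{\xi^\perp}$ is injective, iff $\det(J_\xi|_{\xi^\perp})\neq 0$, iff the constant term of $\chi_\perp$ is non-zero. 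Comparing the coefficient of $x$ on both sides of $\chi(\xi,x) = x\chi_\perp(x)$ identifies this constant term with $(-1)^{n-1}\omega_{n-1}$, yielding the equivalence.

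The argument is essentially elementary linear algebra once the two structural facts ($J_\xi\xi=0$ and $J_\xi$ self-adjoint) are in place. The only substantive point, and the place where the hypothesis is really used, is the step that produces the orthogonal splitting and the block form; if $\xi$ were isotropic then $\xi\in\xi^\perp$ and the factorization $\chi = x\chi_\perp$ above would no longer correspond to a geometric direct sum, so rank could not be read off from $\omega_{n-1}$ in this clean way. I do not anticipate any other technical obstacle.
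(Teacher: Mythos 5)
Your proof is correct, but it takes a genuinely different route from the paper's. The paper (in its Section 3, where the manifold is $(n+1)$-dimensional and the lowest coefficient is written $\omega_n$) works with the $n$-th exterior power $J^{n}_{\xi}$ acting on $n$-vectors: it notes that maximal rank of $J_\xi$ is equivalent to $J^{n}_{\xi}\neq 0$, that $\omega_n=\mathrm{tr}(J^{n}_{\xi})$ up to sign, and that when $J^{n}_{\xi}\neq 0$ it has rank one, $J^{n}_{\xi}=\tau\otimes\mathcal W$ with $\omega_n=\tau(\mathcal W)$; vanishing of this trace puts $\mathcal W$ in the kernel, forces $\mathcal W=\xi\wedge\mathcal W_0$, and then the identity $g(R_{X\xi}\xi,\xi)=0$ yields $g(\xi,\xi)=0$. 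So the paper proves the contrapositive ``maximal rank and $\omega_n=0$ imply $\xi$ isotropic'' and deduces the stated equivalence as a corollary. You instead block-diagonalize $J_\xi$ with respect to the orthogonal splitting $T\mathcal M=\mathbb R\xi\oplus\xi^{\perp}$, using $J_\xi\xi=0$ together with self-adjointness (which the paper records but does not actually invoke in its proof), factor $\chi(\xi,x)=x\,\chi_\perp(x)$, and identify $\omega_{n-1}$ with $\pm\det(J_\xi|_{\xi^\perp})$. Your argument is more elementary and makes completely transparent where non-isotropy is used (namely $\xi\notin\xi^{\perp}$, so the splitting is a genuine direct sum); what it does not produce is the extra structure the paper extracts along the way --- the rank-one factorization $J^{n}_{\xi}=\tau\otimes\mathcal W$ and the reading of the $\omega_i$ as higher-order Ricci-type traces --- which the paper uses in its surrounding discussion. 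Both proofs are valid.
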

Of course this assumption is always satisfied for characteristic vector field on almost 
para-contact metric manifold. Yet our results are remain valid in wider framework 
of pseudo-Riemannian manifolds. Then fact that $\xi$ is non-isotropic is essential.

Author would like to express his gratitude  to Professor Quanxiang Pan for our fruitfull discussion.

\section{Preliminaries}
Let $(\mathcal M, g)$ be pseudo-Riemannian $n$-dimensional manifold, $\nabla$ denote the Levi-Civita connection, and  $R_{XY}Z$ its curvature  
\begin{equation}
R_{XY}Z = \nabla_X\nabla_Y Z -\nabla_Y\nabla_X Z - \nabla_{[X,Y]} Z,
\end{equation} 
it is assumed that letters $U$, $V$, $X$,... are used  to denote  
vector fields, if it is not stated otherwise. Our convention for the Riemann covariant curvature  tensor  is  
\begin{equation}
R(X,Y,Z,W) = g(R_{XY}Z,W).
\end{equation}
For vector field $\xi$ on $\mathcal M$,  $(1,1)$-tensor field 
$X \mapsto J_\xi X=R_{X \xi}\xi$, is called  Jacobi operator.  
 From the properties of the curvature $J_\xi$ is $g$-self-adjoint and uppper limit of its rank is
$$
   g(J_\xi X,Y)=g(J_\xi Y,X),\quad r < {\rm dim}\,\mathcal M.
$$    
If $g$ is definite, $J_\xi$ is semi-simple: eigenvalues are real and tangent space splits 
into orthogonal direct sum of corresponding eigen-spaces.

Given Riemannian manifold $\mathcal M$, A. Gray \cite{Gray}, considered  so-called k-nullity distribution $\mathcal N_k$, $k=const.$,
distribution where the curvature has algebraic form  as curvature of constant sectional curvature manifold
\begin{equation}
R_{XY}Z = k ( g(Y,Z)X  -  g(X,Z)Y),
\end{equation} 
where $Z$ is section of $\mathcal N_k$.

\subsection{Almost para-contact metric manifolds}(\cite{Blair}, \cite{Zam})

Let $\mathcal M$ be $(2n+1)$-dimensional manifold.
Almost para-contact metric structure $(\varphi, \xi,\eta,g)$  is  quadruple of tensor fields: $(1,1)$-tensor field (affinor) $\varphi$, 
characteristic (or Reeb) vector field $\xi$, characteristic 1-form $\eta$,
and pseudo-Riemannian metric $g$, such that, $\varepsilon=\pm 1$,
\begin{eqnarray}
\label{def1} & \varphi^2 = \varepsilon(Id -\eta\otimes \xi), \quad \eta(\xi) =1, & \\[+4pt]
\label {def2} & g(\varphi X, \varphi Y) = \varepsilon(g(X,Y) -\eta(X)\eta(Y)). &  
\end{eqnarray}

For $\varepsilon=-1$, structure is customary called almost contact metric. 
Eigenvalues of $\varphi$ are imaginary, spectrum contains $\{0,-i,i\}$. It is assumed that the metric 
is strictly positive. The latter condition implies that eigenvalues $-i$, $i$ have the same multiplicity 
$n$. 
The triple $(\varphi, \xi, \eta)$  is called almost contact structure.

For $\varepsilon=+1$, structure is called almost para-contact metric. 
Eigenvalues of $\varphi$ are real, the spectrum is  $\{-1,0,+1\}$.  Tangent space decomposes into direct sum of one-dimensional kernel, 
and  $n$-dimensional eigen-spaces $\mathcal V(\pm 1)$.
From definition it follows that restriction of the metric  to any of eigen-space is null tensor,
and $\mathcal V(\pm 1)$ are maximal in dimension isotropic subspaces.  Signature of $g$ is  
\begin{equation}
 \underbrace{-1,\ldots,-1}_{n}, \underbrace{+1,\ldots,+1}_{n+1} 
\end{equation}
Operators
$P_\pm=\varphi \pm Id$, are orthogonal projectors $P_\pm^2=P_\pm $ , $P_{+}P_{-}=P_{-}P_{+}=0$,
onto eigen-spaces $\mathcal V(\pm 1)$.
We have ${\rm Im}\,P_\pm = \mathcal V(\mp 1)$.
In particular
$g(P_\pm X, P_\pm Y)=0$, $g(P_{\pm}X,P_\mp Y)=g(X,Y)$, for $\eta(X)=\eta(Y)=0$. 
 Per analogy the triple $(\varphi, \xi, \eta)$,  is called almost para-contact structure.

From now on we adopt in this paper the convention where both almost contact 
metric and almost para-contact metric structures are all together called 
almost para-contact metric manifolds. We just follow the line where some authors 
use term pseudo-Riemannian manifold in wider sense: manifold equipped with non-degenerate 
quadratic differential form. So the reader should be aware of this. 

For almost para-contact metric structure tensor field $\varPhi(X,Y) = g(X,\varphi Y)$, is skew-symmetric form called fundamental form.
It satisfies 
\begin{equation}
\eta\wedge \varPhi ^n \neq 0,
\end{equation}
at every point, so $\varPhi$ has maximal rank everywhere, its kernel is spanned by characteristic vector field $\xi$. 

Manifold equipped with almost para-contact metric structure is called almost para-contact metric manifold.
Such manifold is always orientable. 

An important notion is normality. Almost para-contact metric manifold is called normal if 
\begin{equation}
[\varphi,\varphi](X,Y)-2\varepsilon\, d\eta\otimes\xi =0,
\end{equation}
where $[\varphi,\varphi]$ denotes Nijenhuis torsion of $\varphi$ 
\begin{equation*}
[\varphi X,\varphi Y]=\varphi^2 [X,Y]+[\varphi X, \varphi Y]-\varphi ([\varphi X,Y]+[X,\varphi Y]).
\end{equation*}
 
 Non-degenerate hypersurface of almost para-Hermitian manifold can be equipped with 
 almost para-contact metric structure.  Thus such hypersurfaces are one of the fundamental  examples of almost para-contact 
 metric manifolds. 

We just mention some classes of almost para-contact metric manifolds.

\begin{definition}
\label{almcont}
{\rm (\cite{Blair}, \cite{MonNicYud}, \cite{DilPas})}.
{\rm Almost para-contact metric 
manifold $(\mathcal M, \varphi,\xi,\eta, g)$ is called }
\begin{enumerate}
\item para-contact metric 
\begin{equation*}
d\eta = \varPhi,
\end{equation*}
\item almost para-cosymplectic (or almost para-coKaehler)
\begin{equation*}
d\eta =0, \quad d\Phi =0,
\end{equation*}
\item almost para-Kenmotsu 
\begin{equation*}
d\eta =0, \quad d\Phi = 2\eta\wedge\Phi.
\end{equation*}
\end{enumerate}
\end{definition}

Assuming additionally  normality 
we obtain following classes of manifolds: para-Sasakian\footnote{Contact metric and normal, etc}, 
 para-cosymplectic (or para-coKaehler) and para-Kenmotsu.

Let define 
$$
                    h=\frac{1}{2}\mathcal L_\xi \varphi, \quad h'=h\circ\varphi ,
$$ 
$\mathcal L_\xi$ denotes the Lie derivative along $\xi$. Applying $\mathcal L_\xi$ to identity $\varphi \xi=0$,
we obtain $h\xi=0$ ($h'\xi=0$ is evident).
 For given appropriate functions $\kappa$, $\mu$, $\nu$ 
- the choice depends on the structure in question -   almost para-contact metric manifold, such that
\begin{multline}
R_{XY}\xi = \kappa(\eta(Y)X-\eta(X)Y)+\mu(\eta(Y)hX -\eta(X)hY) + \\
                                          \nu(\eta(Y)h'X-\eta(X)h'Y),
\end{multline}
is called $(\kappa,\mu,\nu)$-space or $(\kappa,\mu,\nu)$-almost para-contact metric manifold, 
etc. here authors 
adopt different naming conventions. 
The Jacobi 
operator of characteristic vector field on almost contact metric $(\kappa,\mu,\nu)$-space is 
\begin{equation}
J_\xi: X \mapsto R_{X\xi}\xi = -\kappa\eta(X)\xi + (\kappa Id+\mu h +\nu h')X  ,
\end{equation}
cf.  \cite{BlKoufPap}, \cite{Boeckx}, \cite{DacOl}, \cite{DilPas}. Note that $J_\xi$ has 
maximal rank if and only if 
$$
J_\xi |_{ \eta = 0} = \kappa Id  +\mu h +\nu h',
$$
is invertible on $ \eta = 0$.

\section{Symmetric parallel tensors of pseudo-Riemannian manifolds}
The goal of this section is to prove the following result.
\begin{proposition}
\label{symm}
Let $(\mathcal M,g)$ be $(n+1)$-dimensional pseudo-Riemannian manifold. Let assume there is non-isotropic vector field 
$\xi$, $g(\xi,\xi)\neq 0$, with Jacobi operator   of maximal rank. If $\alpha \neq 0$ is parallel differential quadratic form, 
then it is proportional to metric tensor $\alpha=c g$. 
\end{proposition}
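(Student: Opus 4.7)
My plan is to pass from the symmetric $(0,2)$-form $\alpha$ to its $g$-associated $(1,1)$-tensor $A$, defined by $\alpha(X,Y)=g(AX,Y)$. Since $\alpha$ is symmetric, $A$ is $g$-self-adjoint; since $\nabla g=0$ and $\nabla\alpha=0$, one checks in one line that $\nabla A=0$. The whole proof then reduces to showing that $A=c\,\mathrm{Id}$.

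The main tool will be the Ricci identity applied to the parallel tensor $\alpha$. Because $\nabla^2\alpha$ is symmetric in the differentiation slots,
\begin{equation*}
\alpha(R_{XY}U,V)+\alpha(U,R_{XY}V)=0,
\end{equation*}
for all $X,Y,U,V$. Rewriting this through $A$ and using that $R_{XY}$ is skew with respect to $g$ (so that $g(AU,R_{XY}V)=-g(R_{XY}AU,V)$), one concludes that
\begin{equation*}
A\,R_{XY}=R_{XY}\,A\qquad\text{for all }X,Y.
\end{equation*}
So $A$ is a parallel, self-adjoint $(1,1)$-tensor commuting with every curvature operator.

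Now I would specialize this to $\xi$. Since $A$ commutes with $J_\xi$, $A$ preserves $\ker J_\xi$. The maximal rank hypothesis together with $J_\xi\xi=0$ gives $\ker J_\xi=\mathbb R\xi$; because $\xi$ is non-isotropic the splitting $T\mathcal M=\mathbb R\xi\oplus\xi^{\perp}$ is genuine, and self-adjointness of $J_\xi$ yields $\mathrm{Im}\,J_\xi=(\ker J_\xi)^{\perp}=\xi^{\perp}$. From $A(\mathbb R\xi)\subset\mathbb R\xi$ we get $A\xi=\lambda\,\xi$ for a function $\lambda$.

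The clinching step is then short: substituting $Y=U=\xi$ in $A R_{XY}U=R_{XY}AU$,
\begin{equation*}
A\,J_\xi X=A\,R_{X\xi}\xi=R_{X\xi}(A\xi)=\lambda\,R_{X\xi}\xi=\lambda\,J_\xi X,
\end{equation*}
and since $J_\xi X$ ranges over all of $\xi^{\perp}$ we obtain $AY=\lambda Y$ for every $Y\in\xi^{\perp}$. Combined with $A\xi=\lambda\xi$ this gives $A=\lambda\,\mathrm{Id}$ pointwise, i.e. $\alpha=\lambda g$. Finally $\lambda$ is constant: $\nabla A=0$ forces $\nabla\lambda=0$, so on a connected manifold $\lambda=c$.

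The only slightly delicate point in the plan is the identification $\mathrm{Im}\,J_\xi=\xi^{\perp}$, which needs both the maximal-rank hypothesis and non-isotropy of $\xi$ (to ensure $\xi^{\perp}$ is a genuine complement of $\mathbb R\xi$ in a possibly indefinite metric); without either assumption the step $A J_\xi X=\lambda J_\xi X$ would only control $A$ on a proper or degenerate subspace, and the conclusion would fail.
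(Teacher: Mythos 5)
Your proposal is correct, and it takes a genuinely different route from the paper. The paper stays at the level of $(0,2)$-tensors and $1$-forms: it shows that the two $1$-forms $i_\xi\alpha$ and $i_\xi g$ both annihilate $\mathrm{Im}\,J_\xi$, uses maximal rank to force their kernels to coincide with $\mathrm{Im}\,J_\xi$ and hence the forms to be collinear, then antisymmetrizes second covariant derivatives of the resulting identity $\alpha(\xi,X)=\epsilon\alpha(\xi,\xi)g(\xi,X)$ to propagate proportionality to all of $\alpha$; this requires a separate case analysis on whether $i_\xi\alpha$ vanishes. You instead pass to the $g$-associated endomorphism $A$ and extract from the Ricci identity the commutation $AR_{XY}=R_{XY}A$, which makes the structure transparent and avoids any case split: the decisive computation $AJ_\xi X=R_{X\xi}(A\xi)=\lambda J_\xi X$ is a direct specialization of that commutation, and the identification $\mathrm{Im}\,J_\xi=\xi^{\perp}$, $\ker J_\xi=\mathbb{R}\xi$ (which you correctly flag as the place where both hypotheses enter) finishes the argument. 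Both proofs ultimately rest on the same two inputs, but yours is cleaner and isolates the general principle (a parallel self-adjoint endomorphism commuting with the curvature must be scalar when some $J_\xi$ has maximal rank). One small expository gap: the assertion that $A$ commutes with the Jacobi operator does not follow \emph{immediately} from $AR_{XY}=R_{XY}A$, because in $J_\xi$ the variable sits in a different slot of $R$; you should add the one-line justification that $R(AX,Y,Z,W)=R(X,AY,Z,W)$ (obtained from $R(X,Y,AZ,W)=R(X,Y,Z,AW)$ via the pair symmetry), whence $R(A\xi,\xi,\xi,Y)=R(\xi,A\xi,\xi,Y)=-R(A\xi,\xi,\xi,Y)=0$, so $J_\xi(A\xi)=0$ and $A\xi\in\ker J_\xi=\mathbb{R}\xi$ as required. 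With that line inserted, the argument is complete.
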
 
\begin{proof}
We may assume $g(\xi,\xi)=\epsilon$, $\epsilon =\pm 1$. So  there is $(0,2)$-tensor $\alpha(X,Y)=\alpha(Y,X)$ symmetric and parallel $\nabla \alpha =0$, such that our quadratic differential form is just $X \mapsto \alpha(X,X)$. 
Moreover by assumption $r={\rm dim (Im}\,J_\xi)=n$.
We set 
\begin{equation}
i_\xi\alpha(X)=\alpha(\xi,X), \quad i_\xi g(X)=g(\xi,X). 
\end{equation}
Let consider the case  $i_\xi\alpha\neq 0$. For  
$\alpha$ and $g$  are both parallel 
\begin{equation}
i_\xi\alpha(J_\xi \,\cdot) =0, \quad i_\xi g(J_\xi\, \cdot)=0, 
\end{equation}
hence 
$$
{\rm Im }\,J_\xi \subset ( \iota_\xi\alpha =0 ) \cap ( \iota_\xi g=0 ),
$$
and assumption that $J_\xi$ has maximal rank, implies 
$$
{\rm Im }\,J_\xi = ( \iota_\xi\alpha =0 ) = ( \iota_\xi g=0 ),
$$
in particular forms $i_\xi\alpha$ and $i_\xi g$ are  
collinear 
$i_\xi\alpha = \epsilon\alpha(\xi,\xi)i_\xi g$. 
We apply now the second covariant derivatives $\nabla_{Y,\xi}^2$, and $\nabla_{\xi,Y}^2$ resp. to the both sides of the identity
\begin{equation}
\label{coll}
\alpha(\xi,X)=\epsilon \alpha(\xi,\xi)g(\xi,X),
\end{equation}
we have to take into account that by assumption $\alpha$ is covariant constant. 
Subtracting resulting equations, having in 
mind that $R_{Y\xi}\xi = \nabla_{Y,\xi}^2-\nabla_{\xi,Y}^2$, we obtain 
\begin{equation}
\alpha(J_\xi Y, X) =\epsilon\alpha(\xi,\xi)g( J_\xi Y, X),
\end{equation}
which by maximality  follows
\begin{equation}
\alpha = c g, \quad c=\epsilon\alpha(\xi, \xi) \neq 0,
\end{equation}
and by $0=\nabla \alpha = dc\otimes g $, we have $c=const$.

In the case  $i_\xi\alpha =0$,  in the similar way  as above we find 
$\alpha(X,Y) = 0$, $g(\xi,Y)=0$. For $\xi$ is non-isotropic and
$i_\xi\alpha =0$, $\alpha$ must vanish which contradicts our assumption $\alpha \neq 0$. 
Hence the case $i_\xi\alpha =0$ is not  possible.
\end{proof}

\subsection{Trace form}
Let $\mathcal M$ be $(n+1)$-dimensional pseudo-Riemannian manifold. Let recall formula 
for characteristic polynomial 
\begin{equation}
\chi(\xi,x)=det(x Id -J_{\xi})=x^{n+1} -\omega_1(\xi) x^{n}+\ldots (-1)^{n}\omega_{n}(\xi)x.
\end{equation}
Note that $\omega_i$'s as a functions 
$$
  : \xi \mapsto \omega_{i}(\xi)
$$ 
are all smooth differential forms. Indeed, let 
$J^{i}_{\xi}$ denotes the $i$-th exterior power of $J_{\xi}$.
It is $(i,i)$-tensor field - interpreted as endomorphism acting on $i$-th degree polivectors 
on $\mathcal M$. By definition on simple polivector $\mathcal W = V_1\wedge\ldots V_i$ 
$$
J^{i}_{\xi}(V_1\wedge\ldots V_i) = J_{\xi}(V_1)\wedge\ldots J_{\xi}(V_i),
$$ 
then $\omega_{i}=(-1)^{n} tr(J^{i}_{\xi})$. From the 
definition of Ricci tensor we have $Ric(\xi,\xi)=\omega_{1}(\xi)$. So 
we may think of $\omega_{i}$, $i > 1$, as a kind of Ricci tensors of higher
degrees.   
  
Let us recall the metric tensor gives rise to canonical symmetric bilinear form on 
polivectors: we denote it by $g^{\wedge k}$
\begin{equation}
g^{\wedge k}(X_1\wedge\ldots X_k) = det [ g(X_i,X_j)].
\end{equation}

The metric $g^{\wedge k}$ and $J^{k}_{\xi}$ are compatible in the sense 
that the latter is $g^{\wedge k}$-self-adjoint.
\begin{proposition}
Let assume $J^{n}_{\xi}\neq 0$. Then $J^n_{\xi}$ has rank one. In particular there is $n$-form $\tau_{\xi}$ and 
$n$-multivector $\mathcal W_{\xi}$, such that 
\begin{equation}
J^{n}_{\xi}	= \tau_{\xi}\otimes \mathcal W_{\xi},\quad \omega_{n}(\xi)=\tau_{\xi}(\mathcal W_{\xi}),
\end{equation} 
and $\omega_{n}(\xi) = 0$ if and only if $\xi$ is isotropic.
\end{proposition}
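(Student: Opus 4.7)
The plan is to combine three elementary facts about $J_\xi$: it annihilates $\xi$, it is $g$-self-adjoint, and the rank of its $n$-th exterior power is controlled by its own rank. First I would note that $J_\xi\xi = R_{\xi\xi}\xi = 0$ by antisymmetry of $R$, so $\xi \in \ker J_\xi$ and $\dim\,{\rm Im}\,J_\xi \le n$. The hypothesis $J^n_\xi \neq 0$ then forces this rank to be exactly $n$, for otherwise every simple image $n$-vector $J_\xi V_1 \wedge \cdots \wedge J_\xi V_n$ would lie in the $n$-th exterior power of a space of dimension less than $n$ and hence vanish.

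The rank-one statement is then immediate: ${\rm Im}\,J^n_\xi$ sits inside $\Lambda^n({\rm Im}\,J_\xi)$, which is one-dimensional, so $J^n_\xi$ has rank exactly one. Any rank-one endomorphism of a vector space factors as $\mathcal V \mapsto \tau(\mathcal V)\mathcal W$ with $\mathcal W$ a generator of the image; I would take $\mathcal W_\xi$ to be a nonzero element of $\Lambda^n({\rm Im}\,J_\xi)$ and $\tau_\xi$ the corresponding linear functional on $\Lambda^n T_pM$ (an $n$-form), producing $J^n_\xi = \tau_\xi \otimes \mathcal W_\xi$. The standard trace identity ${\rm tr}(\tau_\xi \otimes \mathcal W_\xi) = \tau_\xi(\mathcal W_\xi)$ then delivers $\omega_n(\xi) = \tau_\xi(\mathcal W_\xi)$.

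For the isotropy criterion I would invoke self-adjointness of $J_\xi$ to write ${\rm Im}\,J_\xi = (\ker J_\xi)^\perp$; since $\ker J_\xi$ is the line spanned by $\xi$ under the maximal-rank assumption, this identifies ${\rm Im}\,J_\xi$ with $\xi^\perp$, whence $\xi \in {\rm Im}\,J_\xi$ iff $g(\xi,\xi)=0$. If $\xi$ is non-isotropic, $J_\xi$ restricts to a bijection of $\xi^\perp$; choosing $\mathcal W_\xi$ as the wedge of a basis of $\xi^\perp$ gives $J^n_\xi \mathcal W_\xi = \det(J_\xi|_{\xi^\perp}) \mathcal W_\xi \neq 0$, so $\tau_\xi(\mathcal W_\xi) \neq 0$. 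If $\xi$ is isotropic, then $\xi \in {\rm Im}\,J_\xi$, so $\mathcal W_\xi$ is, up to scalar, of the form $\xi \wedge v_2 \wedge \cdots \wedge v_n$; then $J^n_\xi \mathcal W_\xi = 0$ because $J_\xi\xi = 0$, which forces $\tau_\xi(\mathcal W_\xi) = 0$.

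I expect no substantive obstacle; the argument is essentially exterior-algebra bookkeeping once the identification ${\rm Im}\,J_\xi = \xi^\perp$ is in hand. The central conceptual point is that the zero eigenvalue of $J_\xi$ is simple precisely when $\xi$ lies outside its own image, which is exactly the non-isotropic alternative.
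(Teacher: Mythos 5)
Your proof is correct, and it takes a noticeably different (and in places cleaner) route than the paper's. For the rank-one claim the paper works with the kernel: it exhibits the $n$ independent $n$-vectors $\xi\wedge X_1\wedge\ldots\widehat{X_i}\ldots\wedge X_n$ inside $\ker J^n_\xi$ and concludes the image is at most one-dimensional; you instead bound the image, observing ${\rm Im}\,J^n_\xi\subseteq\Lambda^n({\rm Im}\,J_\xi)$ which is one-dimensional once ${\rm rank}\,J_\xi=n$. These are dual arguments and equally valid. The real divergence is in the isotropy criterion. The paper argues by contradiction: $\tau(\mathcal W)=0$ forces $\mathcal W=\xi\wedge\mathcal W_0$, hence $\xi$ appears with nonzero coefficient in some $R_{X_1\xi}\xi$, and then the identity $g(R_{X\xi}\xi,\xi)=0$ with the $X_i$ chosen orthogonal to $\xi$ kills that coefficient unless $g(\xi,\xi)=0$. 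You instead establish once and for all that ${\rm Im}\,J_\xi=(\ker J_\xi)^\perp=\xi^\perp$ (legitimate here: self-adjointness gives the inclusion and non-degeneracy of $g$ gives the dimension count, even in indefinite signature), after which both directions fall out: if $\xi$ is non-isotropic then $J_\xi$ restricts to an automorphism of $\xi^\perp$ and $\tau_\xi(\mathcal W_\xi)=\det(J_\xi|_{\xi^\perp})\neq 0$; if $\xi$ is isotropic then $\xi$ divides the generator of $\Lambda^n(\xi^\perp)$ and $J^n_\xi\mathcal W_\xi=0$. Your version has two advantages: it identifies $\omega_n(\xi)$ conceptually as the determinant of $J_\xi$ on $\xi^\perp$, and it explicitly proves both implications of the stated equivalence, whereas the paper's written proof only carries out the direction $\omega_n(\xi)=0\Rightarrow g(\xi,\xi)=0$ and leaves the converse implicit. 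The paper's computation, on the other hand, is more elementary in that it never invokes the orthogonal-complement identity for self-adjoint operators.
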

 \begin{proof}
 Let extend $\xi$ to local frame $(\xi, X_1,\ldots,X_{n})$, then 
 \begin{equation}
 \label{base}
\mathcal W_i = \xi\wedge X_{1}\ldots  \wedge \hat{ X}_{i}\wedge\ldots X_n, \quad 1 \leq i \leq n,
\end{equation}  
span the  kernel of $J^{n}_\xi$,  
we see that dimension of kernel is $n$, so 
if $J^{n}_\xi \neq 0$  its image is one-dimensional subspace, so 
there is polivector $\mathcal W$ and $n$-form $\tau$, such that 
\begin{equation}
J^{n}_\xi = \tau\otimes \mathcal W,\quad 
\omega_{n}=tr(J^{n}_\xi) = \tau(\mathcal W),
\end{equation}
$\tau$ and $\mathcal W$ are determined up to re-scaling 
\begin{equation}
	\tau \mapsto f\tau, \quad \mathcal W \mapsto f^{-1}\mathcal W,
\end{equation}
where $f$ is arbitrary non-vanishing function. 

Condition $tr(J^{n}_{\xi})=\tau(\mathcal W)=0$ means that  $\mathcal W$ itself belongs to 
the kernel of $J^{n}_{\xi}$. Every element of the kernel is linear 
combination of polivectors as in (\ref{base}), therefore $\mathcal W$
 has decomposition 
 $$
 \mathcal W = \xi\wedge \mathcal W_{0},
$$   
For some simple polivector $X_{1}\wedge\ldots X_{n}$ we have
$$
 J^{n}_\xi (X_1\wedge\ldots X_{n}) = \mathcal W,
$$
Then 
\begin{equation}
\xi\wedge \mathcal W_0 = \mathcal W = J^{n}_\xi (X_1\wedge\ldots X_{n})
= (R_{X_1\xi}\xi)\wedge\ldots (R_{X_{n}\xi}\xi),
\end{equation}
which follows that up to reoder  
\begin{equation}
R_{X_1\xi}\xi = c_{0} \xi + c_{1}X_{1}+\ldots c_{n}X_{n}, \quad c_0 \neq 0,
\end{equation}
from other hand
$$
  0=g(R_{X_{1}\xi}\xi,\xi)=c_{0}g(\xi,\xi)+\sum\limits_{i=1}^{n}c_{i}g(X_{i},\xi),
$$  
If $g(\xi,\xi)\neq 0$, we 
would take all $X_{1},\ldots X_{n}$, such that $g(X_{i},\xi)=0$ 
then from the above equation we will have $c_{0}=0$ - contradiction. 
Hence $g(\xi,\xi)=0$. 
\end{proof}
Note Jacobi operator has maximal rank if and only if its exterior power $J^n_\xi \neq 0$.
\begin{corollary}
Jacobi operator of non-isotropic vector field has maximal rank if and only if coefficient at lowest term 
of its characteristic polynomial is non-zero. 
\end{corollary}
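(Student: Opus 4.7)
The plan is to read the corollary as an immediate synthesis of two facts already on the table: the preceding remark (maximal rank is equivalent to $J^n_\xi\neq 0$) and the preceding proposition (for a non-isotropic $\xi$, vanishing of $\omega_n$ forces isotropy). First I would set up the notation: since $J_\xi\xi=0$ always, the image of $J_\xi$ lies in the $n$-dimensional subspace $\{\iota_\xi g=0\}^{\perp\!\!\!\!\!\!\perp}$ (more plainly, $\xi\in\ker J_\xi$), so the maximal possible rank is $n$, and the characteristic polynomial factors as $\chi(\xi,x)=x\,q_\xi(x)$ where the constant term of $q_\xi$ is precisely $(-1)^n\omega_n(\xi)$. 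The corollary is therefore the statement ``$\operatorname{rank} J_\xi=n\iff\omega_n(\xi)\neq 0$'' under the standing hypothesis $g(\xi,\xi)\neq 0$.

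Next I would dispose of the two implications in turn. For the easy direction, suppose $\omega_n(\xi)\neq 0$; since $\omega_n(\xi)=\pm\operatorname{tr}(J^n_\xi)$ and the zero operator has vanishing trace, we get $J^n_\xi\neq 0$, and the remark cited just before the corollary then yields $\operatorname{rank} J_\xi=n$. For the other direction, assume $J_\xi$ has maximal rank; by the same remark, $J^n_\xi\neq 0$. Now I would invoke the previous proposition: it says that once $J^n_\xi\neq 0$, one has $\omega_n(\xi)=\tau_\xi(\mathcal W_\xi)$ and $\omega_n(\xi)=0$ if and only if $\xi$ is isotropic. The non-isotropy hypothesis $g(\xi,\xi)\neq 0$ is exactly what lets us conclude $\omega_n(\xi)\neq 0$, completing the equivalence.

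The only place where anything substantive is needed is this last appeal to the previous proposition, and the hard point there is genuinely the role of $g(\xi,\xi)\neq 0$: without it the polivector $\mathcal W$ could itself lie in the kernel of $J^n_\xi$ (so that $\tau(\mathcal W)=0$), producing an operator of maximal rank whose lowest coefficient nonetheless vanishes. So the main conceptual obstacle, already handled upstream, is the contrapositive that this degeneracy forces $\xi$ to be null; the corollary itself is then just a bookkeeping statement recording that in the non-isotropic regime ``$J^n_\xi\neq 0$'' and ``$\operatorname{tr}J^n_\xi\neq 0$'' coincide.
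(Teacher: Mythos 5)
Your proposal is correct and follows essentially the same route the paper intends: the corollary is obtained by combining the remark that maximal rank of $J_\xi$ is equivalent to $J^n_\xi\neq 0$ with the preceding proposition's conclusion that, when $J^n_\xi\neq 0$, one has $\omega_n(\xi)=\tau_\xi(\mathcal W_\xi)=0$ exactly when $\xi$ is isotropic. Your identification of the non-isotropy hypothesis as the point that rules out $\mathcal W_\xi$ lying in $\ker J^n_\xi$ matches the paper's argument precisely.
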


Note there are non-trivial 
parallel quadratic differential forms on Riemannian products. 
\begin{corollary}
Let $(\mathcal M, g)$ be pseudo-Riemannian manifold. If every point admits
locally defined vector field with maximal rank Jacobi operator, then 
$\mathcal M$ is locally irreducible.
\end{corollary}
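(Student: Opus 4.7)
The plan is to argue by contradiction, extracting from a hypothetical local splitting a parallel symmetric tensor that Proposition~\ref{symm} then forbids. Suppose $\mathcal M$ fails to be locally irreducible at some point $p$: on a neighborhood $U$ of $p$ the manifold decomposes isometrically as a pseudo-Riemannian product $U_1 \times U_2$ with both factors of positive dimension. Pulling back the metric of $U_1$ through the product projection yields a non-zero symmetric $(0,2)$-tensor $\alpha$ on $U$ that is parallel with respect to the Levi-Civita connection and manifestly not a scalar multiple of $g$; this is the standard witness of local reducibility already alluded to after the introductory proposition.

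By hypothesis, after possibly shrinking $U$, there is a smooth vector field $\xi$ on it whose Jacobi operator $J_\xi$ has maximal rank. In the non-isotropic case $g(\xi,\xi)(p) \neq 0$, this condition persists on a smaller neighborhood and Proposition~\ref{symm} directly yields $\alpha = c g$, contradicting the construction of $\alpha$.

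The only delicate point is the isotropic case $g(\xi,\xi)(p) = 0$, which I would dispatch by a perturbation argument. First, $\xi(p)\neq 0$, since a zero vector has zero Jacobi operator. Pick a vector field $Y$ with $g(\xi,Y)(p)\neq 0$, and set $\xi_t := \xi + t Y$. Then
\begin{equation*}
g(\xi_t,\xi_t)(p) = 2t\, g(\xi,Y)(p) + t^{2}\, g(Y,Y)(p)
\end{equation*}
is non-zero for all sufficiently small $t \neq 0$. Moreover, the map $\xi \mapsto J^{n}_\xi$ is polynomial in $\xi$, and by the preceding proposition maximal rank of $J_\xi$ is equivalent to $J^{n}_\xi \neq 0$, an open condition. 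Hence, for $t$ small and non-zero, $\xi_t$ is non-isotropic at $p$ with maximal-rank Jacobi operator, both properties extending to a neighborhood by continuity. Proposition~\ref{symm} applied to $\xi_t$ then forces $\alpha = c g$, which is the desired contradiction.

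The main obstacle, in short, is ensuring the hypothesis of Proposition~\ref{symm} can always be met, i.e.\ the isotropic-$\xi$ subtlety; it dissolves once we exploit the openness of the maximal-rank condition via the non-vanishing of $J^{n}_\xi$.
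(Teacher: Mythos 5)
Your argument is correct and follows the route the paper itself intends: the paper offers no explicit proof of this corollary, only the preceding remark that a Riemannian product carries a non-trivial parallel quadratic differential form, so the intended reasoning is exactly your reduction to Proposition~\ref{symm} via the parallel tensor obtained by pulling back one factor's metric through the product projection. Where you go beyond the paper is the isotropic case. Proposition~\ref{symm} is stated only for non-isotropic $\xi$, while the corollary's hypothesis does not exclude an isotropic field with maximal-rank Jacobi operator (by the trace-form proposition such a field would be exactly one with $J^{n}_{\xi}\neq 0$ but $\omega_{n}(\xi)=0$, and nothing in the paper rules this out). Your perturbation $\xi_t=\xi+tY$ --- using non-degeneracy of $g$ to get $g(\xi_t,\xi_t)(p)\neq 0$ for small $t\neq 0$, and the polynomial dependence of $J^{n}_{\xi_t}$ on $t$ to preserve $J^{n}_{\xi_t}\neq 0$, both conditions then spreading to a neighborhood --- closes this gap cleanly, and the resulting $\alpha=cg$ contradicts the fact that the pulled-back factor metric vanishes on the complementary parallel distribution. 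So your proof is not merely consistent with the paper's implicit one; it is more complete.
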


\section{Applications to almost para-contact metric manifolds}

Applications are direct. Assume $(\mathcal M, \varphi,\eta,g)$ is almost 
para-contact metric manifold.
\begin{proposition}
If Jacobi operator of characteristic vector field has maximal rank, then 
 non-zero parallel second order differential form is proportional to 
pseudo-length form. In particular manifold is locally irreducible. 
\end{proposition}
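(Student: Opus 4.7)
The plan is to reduce this statement immediately to Proposition~\ref{symm} of the previous section, so essentially no new work is needed beyond checking that the hypothesis of non-isotropy is automatic for the characteristic vector field of an almost para-contact metric structure.

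First I would verify that $\xi$ is non-isotropic. Starting from the defining identity $g(\varphi X,\varphi Y) = \varepsilon(g(X,Y)-\eta(X)\eta(Y))$ and substituting $X=Y=\xi$, together with $\varphi\xi = 0$ and $\eta(\xi)=1$, one obtains $0 = \varepsilon(g(\xi,\xi)-1)$, hence $g(\xi,\xi) = 1$. Thus on every almost para-contact metric manifold (in either sign convention $\varepsilon = \pm 1$), the Reeb vector field is unit, in particular non-isotropic, so the non-isotropy hypothesis of Proposition~\ref{symm} is automatically satisfied.

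With this in hand, the first assertion is just a restatement of Proposition~\ref{symm} applied with the characteristic vector field in the role of $\xi$: any non-zero parallel symmetric $(0,2)$-tensor $\alpha$ must be of the form $\alpha = c\,g$ with $c$ a non-zero constant, i.e.\ proportional to the pseudo-length form. For the local irreducibility claim I would argue by contradiction: if $(\mathcal M,g)$ were locally isometric to a pseudo-Riemannian product $(\mathcal M_1,g_1)\times(\mathcal M_2,g_2)$ with both factors of positive dimension, then pulling back $g_1$ (extended by zero on the second factor) would yield a parallel symmetric $(0,2)$-tensor that is neither zero nor a scalar multiple of $g$, contradicting what was just proved. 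This is exactly the observation already recorded in the corollary preceding this section.

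No real obstacle is expected here; the conceptual work sits entirely in Proposition~\ref{symm}, and the only potentially delicate point is to note that the verification $g(\xi,\xi)=1$ does not depend on whether the structure is contact, cosymplectic, Kenmotsu, or satisfies any particular nullity condition — it is built into the compatibility axiom (\ref{def2}) alone.
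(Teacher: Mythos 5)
Your proposal is correct and follows the paper's own route: the paper simply notes that ``applications are direct,'' i.e.\ it invokes Proposition~\ref{symm} after observing (as it does in the introduction) that the characteristic vector field is automatically non-isotropic, and it records the product-metric obstruction to reducibility in the preceding corollary exactly as you do. Your explicit check that $g(\xi,\xi)=1$ via \eqref{def2} with $\varphi\xi=0$ and $\eta(\xi)=1$ is a welcome spelled-out detail, not a deviation.
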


In case of $\kappa$-nullity spaces, $\kappa \neq 0$, we just rephrase above result.

\begin{corollary} Non-zero parallel second order differential form on 
$\kappa$-nullity almost para-contact metric manifold is proportional to pseudo-length form
provided $\kappa \neq 0$.
\end{corollary}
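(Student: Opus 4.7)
The plan is to read this corollary as an immediate specialization of the preceding proposition, which gives the conclusion as soon as one knows the Jacobi operator of $\xi$ has maximal rank. So the only content I need to verify is that, on a $\kappa$-nullity almost para-contact metric manifold with $\kappa\neq 0$, the Jacobi operator $J_\xi$ indeed has maximal rank; once that is done the proof is essentially a one-line invocation.

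First I would substitute $\mu=\nu=0$ in the generalized nullity condition and compute
\begin{equation*}
J_\xi X = R_{X\xi}\xi = -\kappa\,\eta(X)\xi + \kappa X = \kappa\bigl(X-\eta(X)\xi\bigr),
\end{equation*}
using the formula for $J_\xi$ recorded at the end of the Preliminaries (with $h=h'=0$, which is automatic on a $\kappa$-nullity space). The right-hand side is $\kappa$ times the projection onto the contact distribution $\ker\eta$. Hence $J_\xi\xi=0$, and on $\ker\eta$ one has $J_\xi|_{\eta=0}=\kappa\,\mathrm{Id}$, which is invertible precisely when $\kappa\neq 0$. Therefore, for $\kappa\neq 0$, the kernel of $J_\xi$ is exactly $\mathrm{span}(\xi)$, so $J_\xi$ has maximal rank $2n$ on the $(2n+1)$-dimensional manifold.

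Finally, I would appeal to Proposition \ref{symm}. The characteristic vector field of an almost para-contact metric structure satisfies $g(\xi,\xi)=\varepsilon=\pm 1$, so $\xi$ is non-isotropic, and the rank condition is the one just verified. Consequently any non-zero parallel symmetric $(0,2)$-tensor $\alpha$ must be of the form $\alpha=c\,g$ with $c$ constant, i.e.\ proportional to the pseudo-length form. There is no real obstacle in this argument; the only thing worth flagging is the sanity check that the characteristic vector field has exactly the non-isotropy required by Proposition \ref{symm}, a point which is guaranteed by (\ref{def2}) applied at $X=Y=\xi$.
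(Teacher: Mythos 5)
Your proof is correct and follows exactly the paper's intended route: the corollary is presented as a direct rephrasing of the preceding proposition, and the only real content is the verification that $J_\xi = \kappa(\mathrm{Id}-\eta\otimes\xi)$ restricts to $\kappa\,\mathrm{Id}$ on $\ker\eta$ and hence has maximal rank precisely when $\kappa\neq 0$, which you carry out. One trivial slip worth noting: evaluating (\ref{def2}) at $X=Y=\xi$ with $\varphi\xi=0$ gives $0=\varepsilon(g(\xi,\xi)-1)$, i.e.\ $g(\xi,\xi)=1$ rather than $\varepsilon$, but non-isotropy is all that Proposition \ref{symm} requires, so the argument stands.
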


The case of $(\kappa,\mu)$-spaces requires study of singular values of 
operator $\kappa Id +\mu h$.  Let denote by 
$\omega(x)=\sum\limits_{i=0}c_ix^{n-i}$, $c_0=1$, the characteristic polynomial of $h$. 
Those values are solutions $(\kappa, \mu)$ of polynomial equation
\begin{equation}
det(\kappa Id+\mu h) = \sum\limits_{i=0}c_i \kappa^{n-i}\mu^i=0,
\end{equation}
are singular values.

The case of almost contact
metric manifold is simpler due to fact that $h$ is  
diagonalizable. Denoting by $\lbrace \lambda_1,\ldots\lambda_k \rbrace$ spectrum of $h$, 
we see that 
$(\kappa,\mu)$ are singular if they satisfy one of the equations
\begin{equation}
  \kappa+\lambda_i \mu =0,\quad i=1,\ldots k.
\end{equation}  

Let have a look at some particular classes of manifolds where we posses more 
detailed information concerning operator $h$. 

\noindent{\bf Example 1.}
Almost Kenmotsu $(\kappa,\mu)$-nullity manifolds. There is strong result which 
asserts that $\kappa = -1$, and $\mu=0$. 
So for such class of manifolds Jacobi operator of $\xi$ is of maximal rank.
If we take instead  
$h'=h\circ \varphi$ (generalized $(\kappa,\mu)'$-nullity spaces), 
then $\kappa \leq -1$ , for $\kappa =-1$, $h'=0$ and for 
$\kappa < -1$, $\mu =- 2$, eigenvalues of $h'$ are $0$, $\pm\sqrt{-k-1}$, 
from these conditions there is one-point singularity $(-2,-2)$. 
Beyond that pair of values vector field $\xi$ has maximal Jacobi operator.

\noindent{\bf Example 2.} Contact metric $(\kappa,\mu)$-nullity spaces.
For such manifold $\kappa \leq 1$, if $\kappa =1$, then $h=0$. For $\kappa <1 $, 
 eigenvalues of $h$ are $0$, $\pm\sqrt{1-k}$ and singular values are 
 pairs $(\kappa,\mu)$ which satisfy $\kappa^2 -(1-k)\mu^2=0$. 
 So, beyond  these points Jacobi operator has maximal rank.
\footnote{Minimal polynomial of $h$ is $x(x^2-(1-\kappa))$} 
 
\thebibliography{99}
\bibitem{Blair}D.~E.~Blair, {\em Riemannian geometry of contact and 
symplectic manifolds}, 
Progress in Math. {\bf 203}, Birkhauser, 2010.
\bibitem{BlKoufPap} D.~E.~Blair, T.~Koufogiorgos, B.~J.~Papantoniou, 
{\em Contact metric manifolds stisfying a nullity condition}, 
Israel J. Math. {\bf 91} (1995), 189--214.
\bibitem{Boeckx} E.~Boeckx, {\em A full classification of almost contact metric 
$(k,\mu)$-spaces}, 
IIllinois J. of Math. {\bf 44 (1)}, (2000), 213--219. 
\bibitem{MonNicYud}B.~Cappelletti-Montano, A.~De Nicola, I.~Yudin, 
{\em Survey on cosymplectic geometry},
Rev. Math. Phys. {\bf 25} (2013), 1343002 (55 pages).
\bibitem{MonErkMur}B.~Cappelletti-Montano, I.~K.~Erken, C.~Murathan, 
{\em Nullity conditions in paracontact geometry}, 
Diff. Geom. Appl. {\bf 30} (2012), 665--693.
 
\bibitem{DacOl} P.~Dacko, Z.~Olszak, 
{\em On almost cosymplectic $(\kappa,\mu,\nu)$-spaces}, 
in: PDEs, Submanifolds and Affine Differential Geometry, Banach Cent. Publ. 
{\bf 69}, 211--220. 

\bibitem{Dac}P.~Dacko, 
{On almost para-cosymplectic manifolds},
Tsukuba J. Math. {\bf 28} (2004), 193--213.

\bibitem{DacErkMur}P.~Dacko, I.~K.~Erken, C.~Murathan, 
{Almost $\alpha$-paracosymplectic manifolds},
J. Geom. Phys. {\bf 88} (2015),  30--51.

\bibitem{DilPas} G.~Dileo, A.~M.~Pastore, {\em Almost Kenmotsu manifolds and nullity 
distributions}, J. Geom. {\bf 93} (2009), 46--61.

\bibitem{Dileo}G.~Dileo, {On the geometry of almost contact metric manifolds 
of Kenmotsu type},
Diff. Geom. Appl. {\bf 29} (2011), 558--564.

\bibitem{Eisen}L.~P.~Eisenhart, 
{Symmetic tensors of the second order whose first covariant derivatives 
are zero},
Trans. Amer. Math. Soc. {\bf 25} (1923)(2), 297--306.

\bibitem{Erd}S.~Erdem, 
{\em On almost (para)contact (hyperbolic) manifolds and harmonicity 
of $(\varphi,\varphi')$-holomoprhic maps between them},
Houston J. Math. {bf 28} (2002), 21--45.

\bibitem{Gray}A.~Gray {\em Spaces of constancy of curavture operators}, 
Proc Amer. Math. Soc. {\bf 17} (1966), 897--902.
\bibitem{PastSalt} A.~M.~Pastore, V.~Saltarelli, 
{\em Generalized nullity distributions on almost Kenmotsu manifolds},
Int. Elect. J. Geom, {\bf 4 (2)} (2011), 168--183.
\bibitem{Salt}V.~Saltarelli, 
{\em Three-dimensional almost Kenmotsu manifolds satisfying certain nullity
conditions},
Bull. Malays. Math. Sci. Soc. (2015) 38:437--459. 
 {\bf 30} (2012), 665--693.
 \bibitem{Zam}S.~Zamkovoy, 
 {Canonical connections on paracontact manifolds},
 Ann. Glob. Anal. Geom. {\bf 36} (2009), 37--60.
\end{document}